\newcommand{\NN}{\mathbb{N}}
\newcommand{\ZZ}{\mathbb{Z}}
\newcommand{\Mod}{{\rm \,\,\,mod\,\,\,}}
\newcommand{\dd}{{\mathrm d}}
\newcommand{\supp}{{\rm supp}}
\newcommand{\fact}{{\mathsf Z}}
\newcommand{\len}{{\mathsf L}}
\newcommand{\atoms}{\mathcal{A}}
\newcommand{\cat}{{\rm c}}
\newcommand{\tame}{{\rm t}}
\newcommand{\Ap}{{\rm Ap}}
\theoremstyle{plain}
\newtheorem{theorem}{Theorem}[section]
\newtheorem{lemma}[theorem]{Lemma}
\newtheorem{proposition}[theorem]{Proposition}
\newtheorem{corollary}[theorem]{Corollary}
\theoremstyle{definition}
\theoremstyle{remark}
\newtheorem{remark}[theorem]{Remark}
\newtheorem*{example}{Example}
\newtheorem{notation}[theorem]{Notation}
\title[The catenary and tame degree of numerical monoids]{The catenary and tame degree of numerical monoids generated by generalized arithmetic sequences}
\author{M. Omidali}
\address{Department of Mathematics, Bu-Ali Sina University, Hamedan, Iran}
\email{mehdioa@gmail.com}
\begin{document}
\maketitle

\begin{abstract}
Studying ceratin combinatorial properties of non-unique factorizations
have been a subject of recent literatures. Little is known about
two combinatorial invariants, namely the catenary degree and the tame degree, even in the
case of numerical monoids. In this paper we compute these invariants for a certain
class of numerical monoids generated by generalized arithmetic sequences. We also
show that the difference between the tame degree and the catenary degree can be arbitrary
large even if the number of minimal generators is fixed.
\end{abstract}

\textbf{Keywords.}
Numerical monoids, Tame degree, Catenary degree

\textbf{2010 Mathematics Subject Classification.} 
20M13
\section{Introduction}
Various aspects of non-unique factorizations in integral domains have
been a subject of researchers in recent years. There are several arithmetic invariants that
measure the behavior of non-unique factorizations in integral domains.
Early works on the behavior of non-unique factorizations in integral domains
were focused on the length of irreducible factorizations of an element. Delta sets
and the elasticity are some of these invariants that measure how far an integral domain
or monoid is from being factorial or half-factorial (i.e., all the irreducible factorizations
of an element have the same length). In recent years other invariants which have close
relations with the distance between irreducible factorizations have been appearing in literatures.
The \textit{catenary degree} and the \textit{tame degree} are such invariants.
The monograph of Geroldinger and Halter-Koch
\cite{GH}, the recent survey \cite{GHservey} and \cite{GGS} are good references for studying the catenary degree and the tame degree
in commutative cancellative monoids.

Throughout $\NN$ will be the set of non-negative integers. A
numerical monoid is a submonoid of $\NN$ (that is closed under
addition) which contains $0$ and its complement in $\NN$ is finite. Every numerical monoid is
necessarily finitely generated and has a minimal set of generators.
For a set $\{n_1,\ldots,n_p\}$ of increasing positive integers,
the numerical monoid generated by $n_i$'s is $\langle n_1,\ldots,n_p\rangle:=\{\sum_{i=1}^pz_in_i|z_i\in\NN,\,\,i=1,\ldots,p\}$.
Delta sets of numerical monoids were studied in \cite{ACHP,BCKR}, and it is shown that in
numerical monoids generated by arithmetic sequences, Delta sets reduce to a single
element. This element is the difference between two consecutive elements of the arithmetic
sequence. Also in \cite{CHM} it is shown the elasticity of a numerical monoid equals
the quotient of the largest by the smallest minimal generator. For numerical monoids
generated by an arithmetic sequence the catenary degree and the tame degree were determined in \cite{CGL}.

In this paper we consider numerical monoids generated by generalized arithmetic sequences, that is
numerical monoids of the form $S=\langle a,ha+d,\ldots,ha+xd\rangle$, where $a,h,x$, and $d$ are
positive integers and $\gcd(a,d)=1$. This class of numerical monoids contains all numerical monoids
generated by arithmetic sequences. We explicitly compute their catenary and tame degrees. As a
result we show that the difference between the tame degree and the catenary degree can be arbitrary large
even if the number of minimal generators is fixed. From [\cite{GH}, Example 3.1.6] it is known that
$$\cat(S)\leq \tame(S)\leq \frac{g(S)+n_p}{n_1}+1,$$
where $\cat(S)$ and $\tame(S)$ are the catenary degree and the tame degree of $S$ and $g(S)$ is the Frobenius number of $S$ (i.e. the largest positive integer not belonging to $S$). For numerical monoids generated
by generalized arithmetic sequences we show that the difference between $\cat(S)$ and $(g(S)+n_p)/n_1+1$ can
be arbitrarily large. Moreover,  $\tame(S)$ can be $\cat(S)$ or $\lfloor (g(S)+n_p)/n_1+1\rfloor$ and all of these things can happen even if the number of minimal generators is fixed.

For general theory of numerical monoids we refer the reader to \cite{RG}. Other aspects of numerical monoids generated by generalized arithmetic sequences are studied in \cite{ACKT}, \cite{Mat},
\cite{OR}, \cite{Rod}, \cite{Sel}. We mainly use a membership criterion presented in \cite{OR}. Computing some combinatorial invariants of numerical monoids, or more generally commutative cancellative
monoids such as the elesticity and the tame degree requires us to determine the set of irreducible elements
of a congruence related to the monoid \cite{CGLR}. All the results in this paper have been tested with the
Numericalsgps package of GAP \cite{SGPS}.

\begin{notation} For a rational number $r$ by $\lceil
r \rceil$ we mean the least integer bigger than or equal to $r$ and
by $\lfloor r \rfloor$ we mean the greatest integer less than or
equal to $r$. Also $(m \mod n)$ means the reminder of quotient of
the integer division of $m$ by $n$ where $m,n\in \ZZ$. For integers
$a,b,$ and $c$ by $a\equiv b(\bmod c)$ we mean that $a-b$ is
divisible by $c$.
\end{notation}
\section{Preliminaries}

Suppose that $S$ is a numerical monoid which is minimally
generated by $\{n_1,\ldots,n_p \}$. Each $n_i$ is called an atom and $\{n_1,\ldots,n_p\}$ is called the set of atoms of $S$ and is denoted by $\atoms(S)$. Define a partial order $\leq_S$ on $S$ by declaring $a\leq_S b$ if $b-a \in S$. The factorization morphism of
$S$ is
$$\varphi:\NN^p\longrightarrow S,\,\,\,
\varphi(z_1,\ldots,z_p)=z_1n_1+\ldots+z_pn_p.$$ Then $S$ is
isomorphic to $\NN^p/\sigma$, where $a\sigma b$ if
$\varphi(a)=\varphi(b)$. The set of factorizations of $n\in S$ is
defined by
$$\fact(n)=\varphi^{-1}(n)=\{(z_1,\ldots,z_p)\in\NN^p|
z_1n_1+\ldots+z_pn_p=n\}.$$

If $z=(z_1,\ldots,z_p)$ is a factorization of $n$, then the length
of $z$ is $|z|=z_1+\ldots+z_p$. The set of lengths of $n$ is defined by
\[\len(n)=\{|z|\,\big|\, z\in\fact(n)\}\subset\NN_0.\]
Also the support of $z$ is defined by
$$\supp(z)=\{i\in\{1,\ldots,p\}|z_i\neq 0\}.$$
For $z=(z_1,\ldots,z_p),z'=(z'_1,\ldots,z'_p)\in\NN^p$ we set
$$\gcd(z,z')=(\min\{z_1,z'_1\},\ldots,\min\{z_p,z'_p\}),
\frac{z}{z'}=z-z'.$$
The distance between $z$ and $z'$ is defined by
$$\dd(z,z')=\max\Big\{\Big|\frac{z}{\gcd(z,z')}\Big|,\Big|\frac{z'}{\gcd(z,z')}\Big|\Big\}.$$
For two nonempty subsets $X$ and $Y$ of $\NN^p$ we define the distance between $X$ and $Y$ as follows
$$\dd(X,Y)=\min\{\dd(x,y)| x\in X, y\in Y\}.$$
If $X=\{x\}$ then we use $\dd(x,Y)$ instead of $\dd(X,Y)$.

Let $n\in S$ and $z,z'\in \fact(n)$. Then an $N-$\textit{chain of factorizations} from $z$
to $z'$ is a sequence $z_0,\ldots,z_k\in \fact(n)$ such that $z=z_0$, $z'=z_k$ and
$\dd(z_i,z_{i+1})\leq N$ for all $i$. The catenary degree of $n$, $\cat(n)$, is defined as the
least non-negative integer $N$ such that for any two factorizations $z,z'\in \fact(n)$, there
is an $N$-chain from $z$ to $z'$. The catenary
degree of $S$ is defined by
\[\cat(S)=\sup\{\cat(n)|n\in S\}.\]

Two elements $z$ and $z'$ of $\NN^p$ are $\mathscr{R}$-related if there exists a sequence
$z=z_0,\ldots,z_k=z'$ in $\NN^p$ such that $\supp(z_i)\cap\supp(z_{i+1})\neq\varnothing$, for every
$i\in\{1,\ldots,k-1\}$. As the number of different factorizations of an element $n\in S$ is finite, so
is the number of different $\mathscr{R}$-classes of $\fact(n)$. Let $n\in S$ and let $\mathscr{R}_1^n,\ldots,
\mathscr{R}^n_{k_n}$ be the different $\mathscr{R}$-classes of elements in $\fact(n)$. Set $\mu(n)=\max\{r_1^n,\ldots,r_{k_n}^n\}$, where $r^n_i=\min\{|z|\big| z\in\mathscr{R}^n_i\}$.
Define
$$\mu(S)=\max\{\mu(n)\big|n\in S \,\,\mathrm{and}\,\, k_n\geq 2\}.$$
\begin{theorem}[\cite{CGLPR}, Theorem 3.1] Let $S$ be a numerical monoid. Then
$$\cat(S)=\mu(S).$$
\end{theorem}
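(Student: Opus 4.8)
The plan is to prove the inequalities $\cat(S)\ge\mu(S)$ and $\cat(S)\le\mu(S)$ separately. A small preliminary observation, checked directly from the definitions of $\gcd$ and $\dd$, will be used repeatedly: the distance is translation invariant, $\dd(w+g,w'+g)=\dd(w,w')$ for all $w,w',g\in\NN^p$, and $w\mapsto w+g$ maps $\fact(m)$ into $\fact(m+\varphi(g))$; consequently a $\mu(S)$-chain from $z-g$ to $z'-g$ inside $\fact(n-\varphi(g))$ lifts, on adding $g$ to every term, to a $\mu(S)$-chain from $z$ to $z'$ inside $\fact(n)$.

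For the lower bound, assume $\mu(S)>0$ (otherwise there is nothing to prove) and fix $n\in S$ with $k_n\ge 2$ and $\mu(n)=\mu(S)$, together with an $\mathscr{R}$-class $\mathscr{R}^n_j$ attaining $r^n_j=\mu(n)$. Choosing $z\in\mathscr{R}^n_j$ and, using $k_n\ge 2$, some $z'\in\fact(n)\setminus\mathscr{R}^n_j$, I would argue that in any chain $z=z_0,z_1,\ldots,z_l=z'$ there is an index $i$ with $z_i\in\mathscr{R}^n_j$ and $z_{i+1}\notin\mathscr{R}^n_j$; since factorizations in distinct $\mathscr{R}$-classes have disjoint supports, $\gcd(z_i,z_{i+1})=0$, so $\dd(z_i,z_{i+1})=\max\{|z_i|,|z_{i+1}|\}\ge|z_i|\ge r^n_j=\mu(S)$. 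Hence $\cat(n)\ge\mu(S)$, and therefore $\cat(S)\ge\mu(S)$.

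For the upper bound I would show, by strong induction on $n\in\NN$, that any two $z,z'\in\fact(n)$ are joined by a $\mu(S)$-chain. If $g:=\gcd(z,z')\ne 0$ this follows at once from the induction hypothesis applied to $n-\varphi(g)<n$ together with the preliminary observation, so the case to treat is $\gcd(z,z')=0$. The heart of the argument is the sub-claim that two factorizations $w,w'$ of $n$ lying in a common $\mathscr{R}$-class are joined by a $\mu(S)$-chain: pick a defining $\mathscr{R}$-sequence $w=u_0,u_1,\ldots,u_k=w'$ inside that class with $\supp(u_t)\cap\supp(u_{t+1})\ne\varnothing$, so that $g_t:=\gcd(u_t,u_{t+1})\ne 0$ for every $t$, and apply the $\gcd\ne 0$ case (reducing to $\fact(n-\varphi(g_t))$, with $n-\varphi(g_t)<n$) to each consecutive pair, concatenating the resulting chains. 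Finally, when $z\in\mathscr{R}^n_a$ and $z'\in\mathscr{R}^n_b$ with $a\ne b$, I would route through minimal-length representatives: let $u\in\mathscr{R}^n_a$, $u'\in\mathscr{R}^n_b$ satisfy $|u|=r^n_a$ and $|u'|=r^n_b$; the sub-claim joins $z$ to $u$ and $z'$ to $u'$ by $\mu(S)$-chains, while $\dd(u,u')\le\max\{|u|,|u'|\}=\max\{r^n_a,r^n_b\}\le\mu(n)\le\mu(S)$ (this uses $k_n\ge 2$, so that $n$ is admissible in the definition of $\mu(S)$), so $u$ and $u'$ already form a one-step $\mu(S)$-chain; concatenating $z\leadsto u\to u'\leadsto z'$ closes the induction.

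I expect the upper bound to be the main obstacle, and within it the crucial point is to induct on $n$ rather than on lengths or distances — this is exactly what turns a common divisor of $z$ and $z'$ into a simplification rather than a nuisance — after which the only genuinely new idea is to reroute factorizations lying in different $\mathscr{R}$-classes through minimal-length representatives, whose mutual distance is automatically bounded by $\mu(n)$. The remaining points are routine: verifying translation invariance of $\dd$, checking that each reduction $n\mapsto n-\varphi(g)$ with $g\ne 0$ is strict because all atoms are positive, and disposing of the degenerate cases $n=0$ and $p=1$.
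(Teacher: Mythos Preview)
The paper does not supply its own proof of this theorem: it is quoted verbatim from \cite{CGLPR} and used as a black box. So there is no in-paper argument to compare against.

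Your proposal is a correct self-contained proof, and it is essentially the standard argument (the one in \cite{CGLPR}). The lower bound is exactly right: at the first step where a chain leaves the $\mathscr{R}$-class of maximal minimum length, the supports become disjoint and the distance is at least that minimum length. For the upper bound your induction on $n$, reducing via a nonzero $\gcd$ and then routing across classes through minimal-length representatives, is the expected strategy and all the reductions are sound. One small point worth making explicit when you write it up: the $\mathscr{R}$-sequence you invoke between $w$ and $w'$ must lie inside $\fact(n)$ (not just in $\NN^p$), since otherwise the translation step back into $\fact(n)$ fails; this is the intended reading of the $\mathscr{R}$-classes of $\fact(n)$, but the paper's phrasing is slightly loose on this, so it is worth stating.
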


Let $n\in S$. For $i\in\{1,\ldots,p\}$ we set $\fact^i(n)=\{(z_1,\ldots,z_p)\in \fact(n)| z_i\neq 0\}$.
Suppose that $n-n_i\in S$ for some $i\in\{1,\ldots,p\}$. We define
\[\tame_i(n)=\max\{\dd(z,\fact^i(n))|z\in \fact(n)\}.\]
Setting
\[\tame(n)=\max\{\tame_i(n)|n-n_i\in S, 1\leq i\leq p\},\]
and 
\[\tame_i(S)=\max\{\tame_i(n)|n-n_i\in S\}, \quad i=1,2,\ldots,p,\]
we define the tame degree of $S$ by
\[\tame(S)=\max\{\tame(n)|n\in S\}=\max\{\tame_i(S)|i=1,2,\ldots,p\}.\]
It is clear from definitions that $\cat(n)\leq \tame(n)\leq \max\len(n)$.

For $a, b\in S$ let $\omega(a,b)$ be the smallest $N\in\NN_0\cup\{\infty\}$ with the following property:
\begin{itemize}
\item For all $n\in\NN$ and $a_1,\ldots,a_n\in S$, if $a=a_1+\ldots+a_n$ and $b\leq_S a$ then there exists a subset $\Omega\subset [1,n]$ such that $|\Omega|\leq N$ and
    \[b\leq_S\sum_{\nu \in \Omega}a_\nu.\]
\end{itemize}
Note that if $b\nleq_S a$, then set $\omega(a,b)=0$. For $b\in S$ we define
\[\omega(S,b)=\sup\{\omega(a,b)\,\mid a\in S\}\in\NN_0\cup\{\infty\}.\]

For $k\in\NN$ and $b\in S$ we set
\begin{align*}
\tau_k(S,b)=&\sup\{\min\len(a-b)\,\big|\, a=u_1+\ldots+u_j\in b+S\,\text{with}\, j\in [0,k],\,\\
& u_1,\ldots,u_j\in\atoms(S),\text{and}\, b\nleq_S a-u_i\, \text{for all}\, i\in[1,j]\}\in\NN_0\cup\{\infty\}
\end{align*}
and
\[\tau(S,b)=\sup\{\tau_k(S,b)\,\big|\, k\in\NN\}\in\NN_0\cup\{\infty\}.\]
We define the $\omega$ invariant and the $\tau$ invariant of $S$ as
\[\omega(S)=\sup\{\omega(S,n_i)\,\mid 1\leq i\leq p\}\in\NN_0\cup\{\infty\},\]
\[\tau(S)=\sup\{\tau(S,n_i)\,\mid  1\leq n_i\leq p\}\in\NN_0\cup\{\infty\}.\]

\begin{theorem} If $S\neq \NN$, then for every $1\leq i\leq p$ we have
\[\tame_i(S)=\max\{\omega(S,n_i),1+\tau(S,n_i)\}\in\NN_{\geq 2}\cup\{\infty\}.\]
\end{theorem}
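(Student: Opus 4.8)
The plan is to prove the identity by a two-sided estimate: establish $\tame_i(S)\ge\omega(S,n_i)$ and $\tame_i(S)\ge 1+\tau(S,n_i)$ separately, establish the matching upper bound $\tame_i(S)\le\max\{\omega(S,n_i),1+\tau(S,n_i)\}$, and finally deduce $\tame_i(S)\ge 2$ from the hypothesis $S\ne\NN$. Throughout, $e_j$ denotes the $j$th standard basis vector of $\NN^p$, and I call $w\in\NN^p$ a \emph{subfactorization} of $z\in\NN^p$ if $w\le z$ coordinatewise. Two elementary observations will be used repeatedly. First, since $n_i$ is an atom, for $n_i\le_S a$ the number $\omega(a,n_i)$ equals $\max_{z\in\fact(a)}\min\{\,|w| : w\le z\text{ and }n_i\le_S\varphi(w)\,\}$; here ``$\le$'' is the definition of $\omega(a,n_i)$ applied to atom factorizations, and ``$\ge$'' follows by refining an arbitrary $S$-factorization of $a$ to atoms and merging a short $n_i$-divisible subfactorization back to the coarse pieces it meets. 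Second, if $a=u_1+\dots+u_j$ with $j\ge 2$, $u_1,\dots,u_j\in\atoms(S)$, $n_i\le_S a$ and $n_i\nleq_S a-u_l$ for every $l$, then no $u_l$ equals $n_i$: were $u_j=n_i$, then for each $l<j$ one has $a-u_l\ge n_i$, so $n_i\le_S a-u_l$, a contradiction.

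For $\tame_i(S)\ge\omega(S,n_i)$: having first checked $\omega(S,n_i)\ge 2$ (see below), choose $a$ with $\omega(a,n_i)=\omega(S,n_i)=:\omega$ — or, if $\omega=\infty$, with $\omega(a,n_i)\ge M$ and run the argument with $M$ in place of $\omega$ — together with $z\in\fact(a)$ and a subfactorization $w\le z$ that is \emph{minimal} among subfactorizations of $z$ with $n_i\le_S\varphi(w)$; then $|w|=\omega$ and $n_i\nleq_S\varphi(w-e_c)$ for all $c\in\supp(w)$. By the second observation (applicable since $\omega\ge 2$) we get $w_i=0$; set $b=\varphi(w)$, so $b-n_i\in S$ and $w\in\fact(b)\setminus\fact^i(b)$. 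For any $z'\in\fact^i(b)$ put $y=\gcd(w,z')$; then $\varphi(w-y)=\varphi(z'-y)$ and $i\in\supp(z'-y)$ give $n_i\le_S\varphi(w-y)$, and if $y\ne 0$ then $w-y\le w-e_c$ for some $c\in\supp(y)$, contradicting minimality of $w$. Hence $\gcd(w,z')=0$ and $\dd(w,z')=\max\{|w|,|z'|\}\ge\omega$, so $\tame_i(b)\ge\dd(w,\fact^i(b))\ge\omega$, and thus $\tame_i(S)\ge\omega(S,n_i)$. For $\tame_i(S)\ge 1+\tau(S,n_i)$: if $\tau(S,n_i)=0$ this is already contained in the previous bound, so assume $\tau(S,n_i)=:\tau\ge 1$ (or $\ge M$ if $\tau=\infty$) and choose $a=u_1+\dots+u_j$ realizing $\min\len(a-n_i)=\tau$, with $a-n_i\in S$ and $n_i\nleq_S a-u_l$ for all $l$. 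Then $a\ne n_i$, which together with $u_l\in\atoms(S)$ forces $j\ge 2$, whence (second observation) $u_l\ne n_i$ for all $l$; let $z\in\fact(a)$ be the associated tuple, so $z_i=0$. For $z'=e_i+w'\in\fact^i(a)$ with $w'\in\fact(a-n_i)$, the same argument as above shows $\gcd(z,w')=0$ (otherwise a proper subfactorization of $z$ would be $n_i$-divisible, i.e.\ $n_i\le_S a-u_l$ for some $l$), so $\dd(z,z')=\max\{j,1+|w'|\}$; minimizing $|w'|$ over $\fact(a-n_i)$ gives $\dd(z,\fact^i(a))=\max\{j,1+\tau\}\ge 1+\tau$, hence $\tame_i(S)\ge 1+\tau(S,n_i)$.

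For the upper bound, fix $n$ with $n-n_i\in S$ and $z\in\fact(n)$; if $z_i>0$ then $\dd(z,\fact^i(n))=0$, so suppose $z_i=0$. By the first observation there is a subfactorization $w\le z$, minimal with $n_i\le_S\varphi(w)$ and $|w|\le\omega(n,n_i)\le\omega(S,n_i)$; minimality means $n_i\nleq_S\varphi(w)-n_c$ for all $c\in\supp(w)$, so $\varphi(w)$ is one of the elements $a$ occurring in the definition of $\tau_{|w|}(S,n_i)$, whence there is $w^{*}\in\fact(\varphi(w)-n_i)$ with $|w^{*}|=\min\len(\varphi(w)-n_i)\le\tau(S,n_i)$. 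Now $z'=(z-w)+e_i+w^{*}$ satisfies $\varphi(z')=n$ and $z'_i\ge 1$ (as $w_i=0$), so $z'\in\fact^i(n)$, and since $z-w\le\gcd(z,z')$ one gets $|z-\gcd(z,z')|\le|w|\le\omega(S,n_i)$ and $|z'-\gcd(z,z')|\le 1+|w^{*}|\le 1+\tau(S,n_i)$, so $\dd(z,z')\le\max\{\omega(S,n_i),1+\tau(S,n_i)\}$; taking suprema over $z$, $n$, $i$ gives the upper bound.

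It remains to note $\omega(S,n_i)\ge 2$. As $S\ne\NN$ we have $p\ge 2$, and since each generator is an atom there is an atom $n_j\ne n_i$ with $n_j-n_i\notin S$ (take $n_j=n_1$ when $i\ne 1$, since $n_1<n_i$; and any $n_j\ne n_1$ when $i=1$, since $n_j=n_1+s$ with $s\in S$ would contradict $n_j\in\atoms(S)$). Picking $m$ with $mn_j-n_i\in S$ (possible since $\NN\setminus S$ is finite), the factorization $m\,e_j$ of $mn_j$ has no subfactorization of length $\le 1$ divisible by $n_i$, so $\omega(mn_j,n_i)\ge 2$, hence $\omega(S,n_i)\ge 2$. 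Combining the three displayed bounds yields $\tame_i(S)=\max\{\omega(S,n_i),1+\tau(S,n_i)\}\in\NN_{\ge 2}\cup\{\infty\}$, as claimed. I expect the upper bound to be the main obstacle: one must choose the single subfactorization $w$ so that it is \emph{simultaneously} short (length controlled by $\omega$, via the first observation) and minimal for $n_i$-divisibility (so that $\varphi(w)$ is admissible for $\tau$), and then both halves of the distance estimate have to be squeezed out of the common part $z-w$ of $z$ and $z'=(z-w)+e_i+w^{*}$; the two auxiliary observations are exactly what makes this book-keeping go through.
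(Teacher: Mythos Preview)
Your argument is correct. You have given a complete, self-contained proof of the identity $\tame_i(S)=\max\{\omega(S,n_i),1+\tau(S,n_i)\}$: the two auxiliary observations are accurate, the lower bounds in Parts~1 and~2 go through exactly as written (in particular, the disjointness $\gcd(w,z')=0$ follows cleanly from minimality of $w$ together with $(z'-y)_i\ge 1$), the upper bound in Part~3 is the key step and your choice of $w$---simultaneously of minimal length and minimal in the coordinatewise order---makes $\varphi(w)$ admissible for $\tau_{|w|}(S,n_i)$ while keeping $|w|\le\omega(S,n_i)$, and the $\ge 2$ bound in Part~4 is fine.

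The paper, however, does not argue any of this. Its entire proof consists of the observation that, since $S\neq\NN$, each minimal generator $n_i$ is an atom which is not prime, and then an appeal to Theorem~3.6 of Geroldinger--Hassler, \emph{Local tameness of $v$-noetherian monoids} (J.\ Pure Appl.\ Algebra 212 (2008), 1509--1524), where the identity is proved in the generality of reduced atomic monoids. What you have written is, in effect, a specialization of the Geroldinger--Hassler argument to the concrete setting of a numerical monoid: working with explicit tuples in $\NN^p$ lets you replace the abstract divisibility bookkeeping by coordinatewise inequalities, which is why your proof reads more combinatorially. The trade-off is that the paper's route is a two-line citation that imports a result valid far beyond numerical monoids, whereas your route is self-contained and makes the mechanism (pick a minimal $n_i$-divisible subfactorization, replace it by $e_i$ plus a shortest factorization of the remainder) completely transparent in this special case.
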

\begin{proof}
Since $S\neq \NN$, all $n_i$ are atoms but not primes. Thus the assertion follows from [\cite{GH08}, Theorem 3.6]. 
\end{proof}
\section{Numerical monoid generated by generalized arithmetic sequences}
Let $S=\langle a,ha+d,\ldots,ha+xd\rangle$ be a numerical monoid where $a,d,h$, and $x$
are positive integers and $\gcd(a,d)=1$. If $i\geq a$ then
$ha+id=\lfloor \frac{i}{a}\rfloor da+(ha+(i \mod a)d) \in S$. Therefore we may assume that $x<a$.
In this case $\{a,ha+d,\ldots,ha+xd\}$ is the minimal set of generators of $S$ (\cite{OR}, Corollary 2.5). We sometimes
use $n_0,n_1,\ldots,n_x$ instead of $a,ha+d,\ldots,ha+xd$, respectively.

\begin{theorem}[\cite{OR}, Proposition 2.1] Suppose that $S=\langle a,ha+d,\ldots,ha+xd\rangle$ where $a,d,h$, and $x$
are positive integers, $\gcd(a,d)=1$, and $1\leq x\leq a-1$. Let $n=qa+id$ where $q,i\in\NN$
and $0\leq i\leq a-1$. Then $n\in S$ if and only if $\lceil\frac{i}{x}\rceil h\leq q$.
\end{theorem}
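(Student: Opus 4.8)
The plan is to turn ``$n\in S$'' into an explicit Diophantine description and then read off the inequality. First I would record that any potential factorization $n = z_0 n_0 + \sum_{j=1}^x z_j n_j$ with all $z_j \in \NN$ regroups as
\[ n = \Bigl(z_0 + h\sum_{j=1}^x z_j\Bigr) a + \Bigl(\sum_{j=1}^x j z_j\Bigr) d . \]
Putting $w = \sum_{j=1}^x z_j$ and $M = \sum_{j=1}^x j z_j$ and comparing with $n = qa + id$ yields $(q - z_0 - hw)a = (M - i)d$; since $\gcd(a,d) = 1$ this forces $a \mid M - i$, and because $M \geq 0$ while $0 \leq i \leq a-1$ the only option is $M = i + \ell a$ with $\ell \in \NN$, so that $q = z_0 + hw + \ell d$. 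The upshot is the reformulation: $n \in S$ iff there exist $z_1,\dots,z_x,\ell \in \NN$ with $\sum_{j=1}^x j z_j = i + \ell a$ and $q \geq h\sum_{j=1}^x z_j + \ell d$.

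For the forward implication I would take such $z_j,\ell$ and argue as follows. From $q \geq hw + \ell d$ and $\ell,d \geq 0$ we get $q \geq hw$; and from $i \leq i + \ell a = M = \sum_{j=1}^x j z_j \leq x\sum_{j=1}^x z_j = xw$ we get $w \geq i/x$, hence $w \geq \lceil i/x\rceil$ since $w$ is an integer. Combining, $q \geq hw \geq h\lceil i/x\rceil$.

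For the converse, assume $q \geq h\lceil i/x\rceil$; the case $i = 0$ is trivial ($n = qa \in S$), so suppose $1 \leq i \leq a-1$ and set $t = \lceil i/x\rceil \geq 1$. Here I would invoke the elementary fact that the sums of $t$ integers taken from $\{1,\dots,x\}$ are precisely the integers in $\{t, t+1,\dots, tx\}$, together with $t \leq i \leq tx$ --- the left inequality because $x \geq 1$ makes $\lceil i/x\rceil \leq i$, the right because $t \geq i/x$. Hence there are $z_1,\dots,z_x \in \NN$ with $\sum z_j = t$ and $\sum j z_j = i$, and then $\sum_{j=1}^x z_j n_j = tha + id$, so
\[ n = qa + id = (q - th)a + \sum_{j=1}^x z_j n_j \in S \]
since $q - th = q - h\lceil i/x\rceil \geq 0$.

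The proof is short, and I expect the only genuine content to sit in two places: the appeal to $\gcd(a,d)=1$ to pass from the equation over $\ZZ$ to the congruence $M \equiv i \pmod{a}$ (which is what fixes $\ell \geq 0$ and the identity $q = z_0 + hw + \ell d$), and the combinatorial description of $\{t,\dots,tx\}$ as the set of $t$-fold sums from $\{1,\dots,x\}$, which is exactly what makes $\lceil i/x\rceil$ the sharp threshold in the converse. The latter is the step I would isolate as a separate little lemma.
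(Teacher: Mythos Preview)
The paper does not actually prove this statement: it is quoted verbatim as Proposition~2.1 of the cited paper \cite{OR} and used as a black-box membership criterion throughout, so there is no in-paper proof to compare against.

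That said, your argument is correct and complete. The forward direction is clean: from a factorization you extract $w=\sum z_j$ and $M=\sum jz_j$, use $\gcd(a,d)=1$ to get $M=i+\ell a$ with $\ell\ge 0$ (this uses $M\ge 0$ and $i\le a-1$, which you note), and then the chain $q\ge hw+\ell d\ge hw$ together with $xw\ge M\ge i$ gives $q\ge h\lceil i/x\rceil$. For the converse, the observation that the $t$-fold sums from $\{1,\dots,x\}$ fill the interval $[t,tx]$, combined with $t\le i\le tx$ for $t=\lceil i/x\rceil$, produces exactly the factorization you need. Both bounds $t\le i$ and $i\le tx$ are justified, and the explicit construction $n=(q-th)a+\sum z_j n_j$ is valid. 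Isolating the ``interval of sums'' fact as a lemma is reasonable, though it is standard enough (e.g.\ write $i=(t-1)x+r$ with $1\le r\le x$ and take $t-1$ copies of $x$ and one copy of $r$) that many authors would fold it into the proof.
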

Note that by a generalization of the euclidean algorithm every integer $n$ has a unique
representation of the form $n=qa+id$ with $0\leq i\leq a-1$. So the above condition gives a simple
membership criterion for numerical monoids generated by generalized arithmetic sequences.

Let $S$ be a numerical monoid and $n\in S$. The Ap\'{e}ry set
associated to $n$ is defined as
$$\Ap (S,n)=\{s\in S| s-n\notin S\}.$$
It is easy to see that $\Ap (S,n)$ has $n$ elements of different
congruence classes modulo $n$. Therefore each element of $S$ can be
uniquely expressed as $kn+w$, where $k\in\NN$ and $w\in \Ap (S,n)$.
\begin{theorem}[\cite{OR}, Proposition 2.6]\label{apery}Let $S=\langle a,ha+d,\ldots,ha+xd\rangle$ where $a,d,h$, and $x$
are positive integers, $\gcd(a,d)=1$, and $1\leq x\leq a-1$. Then
$$\Ap(S,a)=\{\lceil\frac{i}{x}\rceil ha+id| 0\leq i\leq a-1\}.$$
\end{theorem}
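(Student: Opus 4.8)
The plan is to verify directly that each of the $a$ integers
$$w_i:=\Big\lceil\tfrac{i}{x}\Big\rceil ha+id,\qquad 0\le i\le a-1,$$
lies in $\Ap(S,a)$, to observe that these integers occupy pairwise distinct congruence classes modulo $a$, and then to invoke the fact recalled above that $\Ap(S,a)$ has exactly $a$ elements, one in each class modulo $a$; together these force $\Ap(S,a)=\{w_0,\dots,w_{a-1}\}$. For the distinctness of classes I would use $\gcd(a,d)=1$: the residues $0,d,2d,\dots,(a-1)d$ modulo $a$ form a permutation of $0,1,\dots,a-1$, so the $w_i$, each congruent to $id$ modulo $a$, lie in distinct classes and are in particular pairwise distinct.

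For the membership $w_i\in S$ I would apply the membership criterion of [\cite{OR}, Proposition 2.1]. Since $0\le i\le a-1$, the expression $w_i=qa+id$ with $q=\lceil i/x\rceil h$ is exactly the unique representation of $w_i$ of the shape required by that criterion, and membership holds because $\lceil i/x\rceil h\le q$, in fact with equality. For the Apéry condition $w_i-a\notin S$ I would argue as follows: if $i=0$ then $w_0-a=-a\notin\NN$, so trivially $w_0-a\notin S$; if $1\le i\le a-1$ then $\lceil i/x\rceil\ge 1$, hence $q':=\lceil i/x\rceil h-1\ge h-1\ge 0$ is a nonnegative integer and $w_i-a=q'a+id$ with $0\le i\le a-1$ is again the representation to which the criterion applies, so membership fails there because $\lceil i/x\rceil h>q'$. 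Hence every $w_i$ belongs to $\Ap(S,a)$, and since we have exhibited $a$ elements of $\Ap(S,a)$ lying in pairwise distinct classes modulo $a$ while $|\Ap(S,a)|=a$ with one element per class, these elements exhaust $\Ap(S,a)$.

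I do not anticipate a genuine obstacle here; the only points that call for a moment's care are the degenerate case $i=0$ (where $w_i-a$ is negative and the criterion is not even invoked) and checking that the hypothesis $q\in\NN$ of the cited membership criterion is satisfied when it is applied to $w_i-a$, which is immediate from $h\ge 1$.
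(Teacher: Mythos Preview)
Your proof is correct. The paper itself does not supply a proof of this statement: it is quoted from \cite{OR} without argument, so there is no in-paper proof to compare against. Your direct verification---checking via the membership criterion that each $w_i$ lies in $S$ while $w_i-a$ does not, then using $\gcd(a,d)=1$ to see the $w_i$ exhaust the residue classes modulo $a$---is exactly the natural argument and goes through without difficulty.
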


Let $S$ be a numerical monoid generated minimally by $n_1,n_2,\ldots,n_p$. For every $n\in S$ we define
$$C_n=\{V\subseteq \{1,2,\ldots,p\}| \exists z\in\fact(n), V\subseteq \supp(z)\}.$$
It is easy to see that $C_n$ is a simplicial complex and the number of $\mathscr R$-classes of $n$ is equal to the number of path components of $C_n$. We define a graph $G_n=(V_n,E_n)$ as 
\begin{align*}
&V_n=\{i\in\{1,2,\ldots,p\}| n-n_i\in S\},\\
&E_n=\{\{i,j\}|n-(n_i+n+j)\in S, i\neq j\in\{1,2,\ldots,p\}\},
\end{align*}
with $V_n$ and $E_n$ are the sets of vertices and edges of $G_n$, respectively.
Then the number of path components of $C_n$ and $G_n$ are equal and thus  the number of $\mathscr R$-classes of $n$ is equal to the number of path components of $G_n$.
\begin{example} Let $S=\langle 5,6,7,9\rangle$ and $n=18$. Then $18$ can be written in terms of minimal generators as follows:
$$18=2\times 9=5+6+7.$$
$C_{18}$ and $G_{18}$ are depicted in Figure~\ref{CandG}.

\begin{figure}[h]
\begin{tikzpicture}[thick,help lines/.style={thin,draw=black}]
\def\A{$1$} \def\B{$2$}
\def\C{$3$} \def\D{$4$}
\colorlet{input}{black} \colorlet{output}{black}
\colorlet{triangle}{gray}
\coordinate [label=left:\A] (A) at ($ (3,0)$);
\coordinate [label=left:\B] (B) at ($ (3,-1) $);
\coordinate [label=right:\C] (C) at ($ (4,-1) $);
\coordinate [label=right:\D] (D) at ($ (4,0) $);
\draw [input] (A) -- (B) -- (C) -- (A);

\foreach \point in {A,B,C,D}
\fill [black] (\point) circle (2pt);
\begin{pgfonlayer}{background}
\fill[triangle!80] (A) -- (C) -- (B) -- cycle;
\end{pgfonlayer}
\node [below right,text justified] at (3.1,-1.5) {$C_{18}$
};

\def\a{$1$} \def\b{$2$}
\def\c{$3$} \def\d{$4$}
\colorlet{input}{black} \colorlet{output}{black}
\colorlet{triangle}{gray}
\coordinate [label=left:\a] (a) at ($ (8,0)$);
\coordinate [label=left:\b] (b) at ($ (8,-1) $);
\coordinate [label=right:\c] (c) at ($ (9,-1) $);
\coordinate [label=right:\d] (d) at ($ (9,0) $);
\draw [input] (a) -- (b) -- (c) -- (a);

\foreach \point in {a,b,c,d}
\fill [black] (\point) circle (2pt);
\begin{pgfonlayer}{background}
\end{pgfonlayer}
\node [below right,text justified] at (8.1,-1.5) {$G_{18}$
};
\end{tikzpicture}
\caption{$C_n$ and $G_n$ for $S=\langle 5,6,7,9\rangle$ and $n=18$}\label{CandG}
\end{figure}
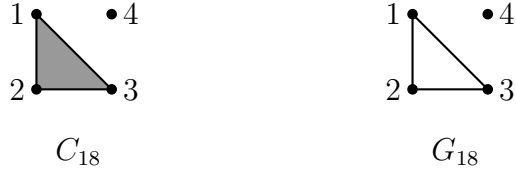
\end{example}

In the proof of [\cite{OR}, Theorem 2.16] we found all elements $n$ of a numerical monoid generated by a generalized
arithmetic sequence for which $G_n$ is disconnected, and
we summarize it in the following Theorem.
\begin{theorem}\label{minpres} Suppose that $S=\langle a,ha+d,\ldots,ha+xd\rangle$ where $a,d,h$, and $x$
are positive integers, $\gcd(a,d)=1$, and $1\leq x\leq a-1$. Let $n\in S$ be such that $C_n$ is disconnected. Then
either $n=n_i+n_j$, with $1\leq i,j\leq x$, or $n\in\{pn_x+n_{r+1},\ldots,pn_x+n_x=(p+1)n_x\}$, where
$p=\lfloor\frac{a-1}{x}\rfloor$ and $r=(a-1)\Mod x$.
\end{theorem}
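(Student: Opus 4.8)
The plan is to carry out everything in the canonical representation $n=qa+id$ with $0\le i\le a-1$, for which [\cite{OR}, Proposition~2.1] gives $n\in S\iff q\ge\lceil i/x\rceil h$; write $\ell=\lceil i/x\rceil$ and $q=\ell h+k$ with $k\ge 0$. The first step is to describe $\fact(n)$ explicitly: to $z=(z_0,z_1,\dots,z_x)\in\fact(n)$ attach $t=z_1+\dots+z_x$ and $s=z_1+2z_2+\dots+xz_x$, so that $n=(z_0+ht)a+sd$; hence $s\equiv i\pmod{a}$, say $s=i+ma$ with $m\ge 0$, and $z_0=q-md-ht$. A short computation with the membership criterion shows that $m\ge 1$ forces $k\ge h(p+1-\ell)+d$, and that for $m=0$ the admissible $t$ are exactly those with $\ell\le t\le\min\{i,\ell+\lfloor k/h\rfloor\}$, with then $z_0=k+h(\ell-t)$ and $(z_1,\dots,z_x)\in\NN^x$ an arbitrary solution of $\sum_j z_j=t$, $\sum_j jz_j=i$.

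The observation driving the proof is that as soon as $k\ge 1$, every factorization with $m=0$ and $t=\ell$ has $z_0=k\ge 1$, hence contains the index $0$ in its support, so these all lie in one common $\mathscr{R}$-class $\mathcal{C}_0$; moreover $\mathcal{C}_0$ contains every index $j$ occurring in some expression of $i$ as a sum of $\ell$ terms from $\{1,\dots,x\}$. Therefore $C_n$ can fail to be connected only when either (i) $k=0$, i.e.\ $n=w\in\Ap(S,a)$ as in Theorem~\ref{apery}, or (ii) $k\ge 1$ and some factorization with $z_0=0$ does not belong to $\mathcal{C}_0$. In case (i), $\fact(n)$ is exactly the set of solutions $(z_1,\dots,z_x)\in\NN^x$ of $\sum_j z_j=\ell$, $\sum_j jz_j=i$ (all with $z_0=0$), and the problem becomes purely combinatorial: $G_n$ is a single vertex when $\ell\le 1$ (then $n$ is $0$ or an atom), $G_n$ is connected when $\ell\ge 3$ (the term $x$ can always be used and co-occurs with every index that occurs), and when $\ell=2$ one has $n=n_j+n_{i-j}$ with $1\le j,i-j\le x$, the first family in the statement.

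For case (ii) one splits on $\ell$. If $\ell\ge 2$, the factorizations with $z_0=0$ present for small $k$ come from $t>\ell$ (and from $m\ge 1$ only once $k$ is large); one checks each belongs to $\mathcal{C}_0$---via the index $x$, by sliding one unit between two coordinates of a composition, or, once $k$ grows, through the plentiful $z_0\ge 1$ factorizations---so $G_n$ stays connected. If $\ell=1$, so $n=ka+n_i$, exactly two values of $k$ are critical: at $k=h$ the factorizations with $t=2$, $z_0=0$ are $n_j+n_{i-j}$ with $j+(i-j)=i\le x$, again the first family; and at $k=ph+d$, factorizations with $s=i+a$ exist precisely when $i\le x-1-r$, are then forced to have $t=p+1$ and $z_0=0$, and use only indices exceeding $i$, so $C_n$ is disconnected---rewriting $q=(p+1)h+d$, $i=j-r-1$ identifies these $n$ with $pn_x+n_j$, $r+2\le j\le x$---while for every other $k$ one checks $G_n$ is connected. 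Finally, if $\ell=0$, so $n=ka$, alternative factorizations first appear at $k=(p+1)h+d$, and at that single value all have $z_0=0$, so the index $0$ is isolated while other vertices are present; here $n=\bigl((p+1)h+d\bigr)a=pn_x+n_{r+1}$, completing the second family, whereas for larger $k$ some alternative factorization keeps a positive $n_0$-coefficient, and for smaller $k$ the factorization of $ka$ is unique.

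The step I expect to be the main obstacle is the bookkeeping in case (ii) for $\ell\in\{0,1\}$: isolating the two critical values of $k$ and proving connectedness for every other value, done carefully enough that the constants $p=\lfloor(a-1)/x\rfloor$ and $r=(a-1)\Mod x$ are tracked exactly, so that the window of disconnected elements comes out as precisely $\{pn_x+n_{r+1},\dots,pn_x+n_x\}$, with neither a gap against, nor an overlap into, the region already shown connected. It helps that only the one-sided implication (disconnected $\Rightarrow$ listed form) is needed, so it suffices to prove connectedness off the stated set, and one need not decide which of the listed $n$ are in fact disconnected.
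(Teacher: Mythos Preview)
The paper does not actually prove this theorem here: the sentence preceding the statement says the result was obtained ``in the proof of [\cite{OR}, Theorem~2.16]'', and Theorem~\ref{minpres} is presented only as a summary of that earlier work. So there is no in-paper argument to compare against; the content resides in the cited reference by the same author.

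That said, your plan is a sound self-contained attack along the lines one would expect, given that [\cite{OR}] is built on the same membership criterion. The parametrisation of $\fact(n)$ by the pair $(m,t)$ with $s=i+ma$, $z_0=k+h(\ell-t)-md$, together with the bounds $\lceil(i+ma)/x\rceil\le t\le i+ma$, is correct, and so is the identification of the second family: for $\ell=1$, $k=ph+d$ one gets $q=(p+1)h+d$ and $1\le i\le x-r-1$, which rewrites as $n=pn_x+n_{j}$ for $r+2\le j\le x$; the case $\ell=0$, $k=(p+1)h+d$ supplies $pn_x+n_{r+1}$. Your key inequality --- from $\sum jz_j=i+a$, $\sum z_j=p+1$ one gets $(x-j)z_j\le x-i-r-1$, hence every index used exceeds $i$ --- is exactly what forces disconnection there.

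Two cautions on the part you yourself flag as the obstacle. First, in case~(i) with $\ell\ge 3$ your parenthetical ``the term $x$ can always be used and co-occurs with every index that occurs'' is not literally true as stated; the clean argument is rather that any two compositions of $i$ into $\ell$ parts from $[1,x]$ are linked by elementary transfers (decrease one part by~$1$, increase another by~$1$), and each such step fixes $\ell-2\ge 1$ parts, so consecutive compositions share an index. Second, for $\ell\in\{0,1\}$ you still owe the verification that for every $k$ \emph{other} than the listed critical ones $G_n$ is connected; in particular, once $k$ exceeds $ph+d$ (resp.\ $(p+1)h+d$) factorizations with $m\ge 1$ appear and you must check they either keep $z_0\ge 1$ or share an index with one that does. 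This is routine but has to be written out carefully for the constants $p,r$ to line up exactly; since only the implication ``disconnected $\Rightarrow$ listed'' is claimed, it suffices, as you note, to prove connectedness off the stated set.
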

\begin{proposition} Suppose that $S=\langle a,ha+d,\ldots,ha+xd\rangle$ where $a,d,h$, and $x$
are positive integers, $\gcd(a,d)=1$, and $1\leq x\leq a-1$. Let
$n=qa+id\in S$ be such that $0\leq i<a$. Then
$$\max\len(n)=q-\lceil\frac{i}{x}\rceil(h-1).$$
\end{proposition}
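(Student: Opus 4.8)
The plan is to parametrize each factorization of $n$ by the ``$d$-weight'' it carries and reduce the problem to two elementary estimates. Write a factorization of $n$ as $z=(z_0,z_1,\dots,z_x)\in\NN^{x+1}$ with $z_0a+\sum_{j=1}^x z_j(ha+jd)=n$, and introduce the auxiliary quantities $Z:=\sum_{j=1}^x z_j$ and $T:=\sum_{j=1}^x j\,z_j$. Then $n=(z_0+hZ)a+Td$, so comparing with $n=qa+id$ and using $\gcd(a,d)=1$ forces $(z_0+hZ-q)a=(i-T)d$, hence $a\mid i-T$; writing $T=i+ma$ gives $z_0=q-hZ-md$ and therefore
$$|z|=z_0+Z=q-(h-1)Z-md.$$
Thus the task is exactly to minimize $(h-1)Z+md$ over all factorizations of $n$, since $\max\len(n)=q-\min\big((h-1)Z+md\big)$.

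Next I would record the two constraints that pin this quantity down. Since $T=\sum_{j=1}^x j\,z_j\ge 0$ and $0\le i<a$, the integer $m$ must satisfy $m\ge 0$ (if $m\le -1$ then $T\le i-a<0$). And since $T=\sum_{j=1}^x j\,z_j\le x\sum_{j=1}^x z_j=xZ$ while $T=i+ma\ge i$, we get $Z\ge\lceil T/x\rceil\ge\lceil i/x\rceil$. Combining these with $h\ge 1$ and $d\ge 1$ yields $(h-1)Z+md\ge(h-1)\lceil i/x\rceil$, so every $z\in\fact(n)$ satisfies $|z|\le q-\lceil i/x\rceil(h-1)$. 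This gives the upper bound, and the case $i=0$ is subsumed (the bound then reads $|z|\le q$, attained by the single generator $a$).

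Finally I would exhibit a factorization meeting the bound: take $m=0$ (i.e.\ aim for $T=i$) and $Z=\lceil i/x\rceil$. Because $\lceil i/x\rceil\le i\le x\lceil i/x\rceil$, the value $i$ lies in the interval $[Z,xZ]$, so one can choose $z_1,\dots,z_x\in\NN$ with $\sum_{j=1}^x z_j=Z$ and $\sum_{j=1}^x j\,z_j=i$; setting $z_0=q-h\lceil i/x\rceil$, which is nonnegative by the membership criterion [\cite{OR}, Proposition 2.1] applied to $n\in S$, produces a genuine factorization of $n$ of length $q-\lceil i/x\rceil(h-1)$. I do not expect a serious obstacle; the only point requiring a moment's care is that minimizing $Z$ and minimizing $m$ do not compete, which is precisely the content of the implication $T\ge i\Rightarrow Z\ge\lceil i/x\rceil$ used above.
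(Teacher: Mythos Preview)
Your proof is correct, and the two arguments differ mainly in how the upper bound is obtained. The paper proves $\max\len(n)\le q-\lceil i/x\rceil(h-1)$ by induction on $q$: it writes $\max\len(n)=1+\max_j\max\len(n-n_j)$ and handles the three cases $j=0$, $1\le j\le i$, and $i<j$ separately, the last of which forces a rewriting of $n-n_j$ to bring the $d$-coefficient back into $[0,a-1]$. You instead parametrize every factorization by the pair $(Z,T)$ and the integer $m$ with $T=i+ma$, obtaining the closed formula $|z|=q-(h-1)Z-md$; the two inequalities $m\ge 0$ and $Z\ge\lceil i/x\rceil$ then give the bound in one stroke, with no induction and no case split. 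For the lower bound both arguments construct the same factorization: the paper writes it down explicitly according to whether $x\mid i$, while you appeal to the interval condition $\lceil i/x\rceil\le i\le x\lceil i/x\rceil$ to guarantee a choice of $z_1,\dots,z_x$ with the prescribed sum and weighted sum. Your route is more compact and makes transparent exactly when the maximum is attained (namely $m=0$, $Z=\lceil i/x\rceil$); the paper's induction avoids having to justify that every admissible $(Z,T)$ is realizable, but at the cost of the three-case analysis.
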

\begin{proof} First we show that $\max\len(n)\geq q-\lceil\frac{i}{x}\rceil(h-1)$. Let $i=\lceil\frac{i}{x}\rceil x-s$ with $0\leq s<x$.
\begin{itemize}
\item If $s=0$ then
$$n=qa+id=qa+\lceil\frac{i}{x}\rceil xd=(q-\lceil\frac{i}{x}\rceil
h)a+\lceil\frac{i}{x}\rceil(ha+xd).$$ Therefore
$$\max\len(n)\geq (q-\lceil\frac{i}{x}\rceil
h)+\lceil\frac{i}{x}\rceil=q-\lceil\frac{i}{x}\rceil(h-1).$$
\item If $s>0$ then $1\leq \lceil\frac{i}{x}\rceil$ and $\lceil\frac{i}{x}\rceil h\leq q$. Therefore
\[\begin{split}
n&=qa+id\\
&=qa+(\lceil\frac{i}{x}\rceil x-s)d\\
&=(q-\lceil\frac{i}{x}\rceil h)a+(\lceil\frac{i}{x}\rceil-1)(ha+xd)+(ha+(x-s)d).\\
\end{split}\]
Thus
$$\max\len(n)\geq(q-\lceil\frac{i}{x}\rceil h)+(\lceil\frac{i}{x}\rceil-1)+1=q-\lceil\frac{i}{x}\rceil(h-1).$$
\end{itemize}

Now we show that $\max\len(n)\leq q-\lceil\frac{i}{x}\rceil(h-1)$. We
use induction on $q$ to show this. If $q=0$ then
$\lceil\frac{i}{x}\rceil h\leq 0$. Therefore $i=0$, $n=0$, and
$0=\max\len(n)\leq q-\lceil\frac{i}{x}\rceil(h-1)$. Now let $q>0$. We
have
$$\max\len(n)=1+\max\{\max\len(n')|n'\in \{n-n_0,n-n_1,\ldots,n-n_x\}\cap S\}.$$
So it is enough to show that for any $0\leq j\leq x$, if
$n'=n-n_j\in S$, then $\max\len(n')+1\leq
q-\lceil\frac{i}{x}\rceil(h-1)$. There are three cases.
\begin{itemize}
\item First let $j=0$ and $n'=n-n_0=(q-1)a+id\in S$. Then by induction
$$\max\len(n')+1\leq
(q-1)-\lceil\frac{i}{x}\rceil(h-1)+1=q-\lceil\frac{i}{x}\rceil(h-1).$$
\item Now let $1\leq j\leq i$ and $n'=n-n_j=(q-h)a+(i-j)d\in S$. Then
$0\leq i-j<a$ and $\lceil\frac{i-j}{x}\rceil \geq
\lceil\frac{i-x}{x}\rceil= \lceil\frac{i}{x}\rceil-1$. By induction
we have
$$\max\len(n')+1\leq (q-h)-\lceil\frac{i-j}{x}\rceil(h-1)+1\leq
q-\lceil\frac{i}{x}\rceil(h-1).$$
\item Finally suppose that $i<j$ and $n'=n-n_j=(q-h)a+(i-j)d\in S$.
Then $n'=(q-h-d)a+(i-j+a)d$ with $0\leq i-j+a<a$. By induction and
the fact that $i-j+a\geq i$ we have
$$\max\len(n')+1\leq (q-h-d)-\lceil\frac{i-j+a}{x}\rceil(h-1)+1\leq q-\lceil\frac{i}{x}\rceil(h-1).$$
\end{itemize}
\end{proof}
\begin{corollary}\label{order}Suppose that $S=\langle a,ha+d,\ldots,ha+xd\rangle$ where $a,d,h$, and $x$
are positive integers, $\gcd(a,d)=1$, and $1\leq x\leq a-1$. Let $n=qa+id$ then $n\in S$
if and only if
\[\lceil\frac{i\Mod a}{x}\rceil h\leq q+\lfloor\frac{i}{a}\rfloor d,\]
and in this case
\[\max\len(n)=(q+\lfloor\frac{i}{a}\rfloor d)-\lceil\frac{i\Mod a}{x}\rceil(h-1).\]
\end{corollary}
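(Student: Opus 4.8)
The plan is to reduce the given expression $n=qa+id$, in which $i$ is no longer assumed to lie in $\{0,1,\ldots,a-1\}$, to the canonical representation of $n$ modulo $a$, and then to quote the membership criterion [\cite{OR}, Proposition~2.1] together with the preceding Proposition, each of which is stated only for representations whose remainder lies in $[0,a-1]$.

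First I would use $i=\lfloor\frac{i}{a}\rfloor a+(i\Mod a)$ to rewrite
\[n=qa+id=\Bigl(q+\lfloor\tfrac{i}{a}\rfloor d\Bigr)a+(i\Mod a)\,d.\]
Put $q'=q+\lfloor\frac{i}{a}\rfloor d$ and $i'=i\Mod a$. Since $d$ is a positive integer and $q,i\in\NN$, the integer $q'$ is nonnegative, and $0\le i'\le a-1$ by the definition of the remainder; hence $n=q'a+i'd$ is precisely the canonical representation of $n$ modulo $a$ to which the earlier results apply.

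Next, the membership criterion applied to $n=q'a+i'd$ says that $n\in S$ if and only if $\lceil\frac{i'}{x}\rceil h\le q'$, and substituting back the definitions of $q'$ and $i'$ turns this into $\lceil\frac{i\Mod a}{x}\rceil h\le q+\lfloor\frac{i}{a}\rfloor d$, which is the first claim. Assuming now $n\in S$, the preceding Proposition applied to $n=q'a+i'd$ (legitimate since $0\le i'<a$) gives $\max\len(n)=q'-\lceil\frac{i'}{x}\rceil(h-1)$, and substituting back yields $\max\len(n)=(q+\lfloor\frac{i}{a}\rfloor d)-\lceil\frac{i\Mod a}{x}\rceil(h-1)$, as desired.

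There is no genuine obstacle; the only point worth recording is that the change of variables $(q,i)\mapsto(q',i')$ keeps $q'$ a nonnegative integer, so that the two earlier results may be invoked verbatim. Under the hypothesis $q,i\in\NN$ this is automatic, and even if one permitted $q$ or $i$ to be negative, membership of $n$ in $S\subseteq\NN$ would in any case force $q'\ge 0$. With this remark in place, the corollary is just the previous two statements after reducing $i$ modulo $a$.
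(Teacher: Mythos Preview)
Your proposal is correct and is exactly the paper's approach: the paper's entire proof is the single sentence ``Just note that $n=(q+\lfloor\frac{i}{a}\rfloor d)a+(i\Mod a)d$,'' and you have simply spelled out what that sentence means, namely that after this rewriting one quotes the membership criterion and the preceding Proposition verbatim. Your extra remark about $q'\ge 0$ is a welcome clarification, but there is no substantive difference between your argument and the paper's.
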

\begin{proof} Just note that $n=(q+\lfloor\frac{i}{a}\rfloor d)a+(i\Mod a)d$.
\end{proof}
\subsection{The catenary degree}
We are ready to find the catenary degree of numerical monoids generated by
generalized arithmetic sequences.
\begin{theorem}[\cite{CGL}, Proposition 5] \label{mincatenary} Let $S$ be a numerical monoid minimally generated
by $\{n_1,\ldots,n_p\}$ with $0<n_1<\ldots<n_p$. Then
\begin{equation}\label{bound}\min\{k\in\NN\setminus\{0\}\,|\,kn_1\in\langle n_2,\ldots,n_p\rangle\}\leq \cat(S).\end{equation}
\end{theorem}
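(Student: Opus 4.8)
The plan is to exhibit a single element of $S$ whose two ``extreme'' factorizations cannot be joined by short chains. Put $k_0:=\min\{k\in\NN\setminus\{0\}\mid kn_1\in\langle n_2,\ldots,n_p\rangle\}$; this set is nonempty (a sufficiently large common multiple of $n_1$ and $\gcd(n_2,\ldots,n_p)$ lies in $\langle n_2,\ldots,n_p\rangle$), and $k_0\geq 2$ because $n_1$ is a minimal generator. Consider the element $n:=k_0n_1$. It has the factorization $z=(k_0,0,\ldots,0)$, and, by the very definition of $k_0$, it also has a factorization $z'$ whose first coordinate is $0$. Since the supports of $z$ and $z'$ are disjoint, $\gcd(z,z')$ is the zero vector and $\dd(z,z')=\max\{|z|,|z'|\}\geq k_0$.

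Next I would show that \emph{every} chain of factorizations of $n$ from $z$ to $z'$ must contain a step of distance at least $k_0$. Fix such a chain $z=z_0,z_1,\ldots,z_m=z'$ with $\dd(z_t,z_{t+1})\leq N$ for all $t$, and let $j$ be the least index for which the first coordinate of $z_j$ is strictly less than $k_0$; it exists because the first coordinate of $z_m=z'$ is $0$, and $j\geq 1$ since the first coordinate of $z_0=z$ is $k_0$. For every $t<j$ the first coordinate of $z_t$ is at least $k_0$; but $z_t\in\fact(k_0n_1)$ forces it to be at most $k_0$ (its $n_1$-term cannot exceed $k_0n_1=n$), so it equals $k_0$ and hence $z_t=z$. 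Thus the chain stays at $z$ up to index $j-1$, and the decisive step is the one from $z_{j-1}=z$ to $z_j$.

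To finish, I would analyze that step. Write the first coordinate of $z_j$ as $k_0-k'$ with $k'\geq 1$. Deleting the $n_1$-part of $z_j$ yields $\sum_{l\geq 2}(z_j)_ln_l=k'n_1$, that is $k'n_1\in\langle n_2,\ldots,n_p\rangle$ with $1\leq k'\leq k_0$; minimality of $k_0$ then forces $k'=k_0$, so the first coordinate of $z_j$ is $0$. Hence $\gcd(z,z_j)$ is the zero vector and $\dd(z_{j-1},z_j)=\max\{|z|,|z_j|\}\geq|z|=k_0$, giving $N\geq k_0$. Applying this with $N=\cat(n)$ we obtain $\cat(n)\geq k_0$, whence $\cat(S)\geq\cat(n)\geq k_0$, which is exactly \eqref{bound}.

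The one delicate point is the last paragraph: showing that as soon as the first coordinate of a factorization of $k_0n_1$ drops below $k_0$, it must drop all the way to $0$. This is precisely where the minimality in the definition of $k_0$ is used, and it is what makes the bound genuinely the stated quantity rather than some weaker lower estimate; the remaining ingredients --- producing the two factorizations, computing the distance between disjointly supported vectors, and passing from $\cat(n)$ to $\cat(S)$ --- are routine.
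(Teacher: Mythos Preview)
Your argument is correct. The crucial observation---that any factorization of $k_0n_1$ has first coordinate either $k_0$ (and then equals $z$) or $0$---is exactly the right use of the minimality of $k_0$, and once you have it the chain argument is clean: the first step away from $z$ already lands on a factorization with disjoint support, so that step has distance at least $|z|=k_0$. Two minor cosmetic points: the non-emptiness of the defining set is easier than you make it, since $k=n_2$ already works ($n_2n_1=n_1n_2\in\langle n_2,\ldots,n_p\rangle$); and the sentence ``its $n_1$-term cannot exceed $k_0n_1=n$'' could be stated more simply as ``the remaining coordinates are non-negative''.

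As for comparison with the paper: this theorem is quoted from \cite{CGL} and is not proved in the present paper, so there is no in-house argument to compare against. That said, the machinery set up in the preliminaries (Theorem~2.1, $\cat(S)=\mu(S)$) gives an alternative one-line route once your key observation is in hand: since every factorization of $n=k_0n_1$ other than $z$ has support contained in $\{2,\ldots,p\}$, the singleton $\{z\}$ is an $\mathscr R$-class of $\fact(n)$ on its own, so $n$ has at least two $\mathscr R$-classes and $\mu(n)\geq|z|=k_0$, whence $\cat(S)=\mu(S)\geq k_0$. Your direct chain argument avoids invoking that theorem and stays closer to the definition of the catenary degree; the $\mu(S)$ route is shorter but relies on the cited structural result. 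Either is fine.
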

Even though there are numerical monoids for which the inequality (\ref{bound}) is strict, we will
show that it is an equality for numerical monoids generated by generalized arithmetic sequences.
Not all numerical monoids benefit from this nice property. It even fails for
numerical monoids generated by almost arithmetic sequences (i.e., numerical monoids generated by
a set for which all but one element form some consecutive elements of an arithmetic sequence). For
example any three relatively prime integers form an almost arithmetic sequence but
$S=\langle 6,9,11\rangle$ for which we have $\cat(S)=4$ but $\min\{k\in\NN\setminus\{0\}| 6k\in\langle 9,11\rangle\}=3$.
\begin{lemma}\label{mingeneral} Let $a,h,x$, and $d$ be positive integers with $x<a$ and $\gcd(a,d)=1$. Then
$$\min\{k\in\NN\setminus \{0\}|\, ka\in\langle
ha+d,\ldots,ha+xd\rangle\}=\lceil\frac{a}{x}\rceil h+d.$$
\end{lemma}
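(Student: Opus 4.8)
The plan is to show the two inequalities $\min\{k \in \NN\setminus\{0\} \mid ka \in \langle ha+d,\ldots,ha+xd\rangle\} \leq \lceil\frac{a}{x}\rceil h + d$ and $\geq \lceil\frac{a}{x}\rceil h + d$ separately, using the membership criterion of Theorem~\ref{minpres}'s predecessor (the Proposition labelled as \cite{OR}, Proposition 2.1) together with the length formula from Corollary~\ref{order}. For the upper bound, set $k = \lceil\frac{a}{x}\rceil h + d$ and consider $ka = (\lceil\frac{a}{x}\rceil h + d)a$. The idea is to write this as an element of the submonoid $\langle ha+d,\ldots,ha+xd\rangle = \langle n_1,\ldots,n_x\rangle$, i.e.\ to exhibit a factorization of $ka$ that does not use the generator $n_0 = a$. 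Note $ka = ka + 0\cdot d$, and I want to rewrite it in the form $q'a + i'd$ with a positive multiple of $d$ appearing. Since $\gcd(a,d) = 1$, I can trade: $ka = (k - d)a + d\cdot a = (k-d)a + a\cdot d$, but $a \cdot d$ has $d$-coordinate $a \equiv 0$, so more care is needed — the right move is to observe that we need a factorization all of whose support lies in $\{1,\ldots,x\}$, equivalently $ka - \ell a \notin$ any obstruction; concretely, by the Apéry-set description (Theorem~\ref{apery}) the element $ka$ lies in $\langle n_1,\ldots,n_x\rangle$ iff $ka$ has a representation using only the $n_i$ with $i \geq 1$, and one checks that $\lceil\frac{a}{x}\rceil(ha+xd) = \lceil\frac{a}{x}\rceil ha + \lceil\frac{a}{x}\rceil xd$; reducing $\lceil\frac{a}{x}\rceil x d$ modulo $a$ (using $\lceil\frac{a}{x}\rceil x \geq a$) peels off $\lfloor \frac{\lceil a/x\rceil x}{a}\rfloor$ copies that contribute to the $a$-part, and a short computation shows the total $a$-coefficient works out to exactly $\lceil\frac{a}{x}\rceil h + d$ with zero contribution from $n_0$. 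This direction is essentially a bookkeeping exercise once the right candidate factorization is written down.

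For the lower bound, suppose $ka \in \langle n_1,\ldots,n_x\rangle$ with $k \geq 1$. Then there is a factorization $z = (z_1,\ldots,z_x) \in \fact(ka)$ with $z_0 = 0$. I compute modulo $d$: since each $n_i = ha + id$, we get $ka \equiv \sum_{i=1}^x z_i(ha) \equiv h a \sum z_i \pmod{d}$... actually the cleaner reduction is modulo $a$: we have $ka \equiv 0 \pmod a$, while $\sum_{i=1}^x z_i n_i \equiv \sum_{i=1}^x z_i \cdot i d \pmod a$, so $d \sum_{i=1}^x i z_i \equiv 0 \pmod a$, and since $\gcd(a,d)=1$ this forces $\sum_{i=1}^x i z_i \equiv 0 \pmod a$, hence $\sum_{i=1}^x i z_i \geq a$ (it is positive as $k \geq 1$ means $z \neq 0$). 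Writing $|z| = \sum z_i \geq \frac{1}{x}\sum i z_i \geq \frac{a}{x}$, so $|z| \geq \lceil \frac{a}{x}\rceil$. Now $ka = \sum z_i n_i = h a |z| + d \sum i z_i \geq ha\lceil\frac{a}{x}\rceil + da$, giving $k \geq h\lceil\frac{a}{x}\rceil + d$, which is exactly the desired bound.

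I expect the main obstacle to be the upper-bound direction: producing an \emph{explicit} factorization of $(\lceil\frac{a}{x}\rceil h + d)a$ inside $\langle n_1,\ldots,n_x\rangle$ requires handling the case split on whether $x \mid a$ (i.e.\ whether the remainder $a \Mod x$ vanishes), much as in the proof of the preceding Proposition on $\max\len$. When $x \mid a$ one uses $\frac{a}{x}$ copies of $n_x = ha+xd$ and adjusts the $a$-part; when $x \nmid a$, writing $a = \lceil\frac{a}{x}\rceil x - s$ with $0 < s < x$, one uses $\lceil\frac{a}{x}\rceil - 1$ copies of $n_x$ plus one copy of $n_{x-s}$, then corrects the $a$-coefficient — and one must verify the corrected $a$-coefficient is a genuine nonnegative integer, which is where the $+d$ term in the formula enters. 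Once both inequalities are in hand the lemma follows, and the lower-bound argument above via reduction modulo $a$ combined with the inequality $\sum z_i \geq \frac1x \sum i z_i$ is the conceptual heart of why the answer takes this particular form.
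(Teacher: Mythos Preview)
Your argument is correct and takes a genuinely different route from the paper's. The paper first reduces to the case $\gcd(h,d)=1$, then re-parametrizes the submonoid $\langle ha+d,\ldots,ha+xd\rangle$ as a numerical monoid generated by a generalized arithmetic sequence in its own right (with $a'=ha+d$, $h'=1$, $x'=x-1$, $d'=d$), and applies the membership criterion from \cite{OR} to characterize those $k$ with $ka\in S'$; minimizing over the resulting Diophantine conditions yields $k=\lceil a/x\rceil h+d$. Your approach is more direct and elementary: for the lower bound you reduce a factorization $ka=\sum_{i=1}^x z_i(ha+id)$ modulo $a$ to force $\sum iz_i\equiv 0\pmod a$, hence $\sum iz_i\geq a$, and combine this with the trivial bound $|z|\geq\frac{1}{x}\sum iz_i\geq a/x$ (so $|z|\geq\lceil a/x\rceil$) to get $k=h|z|+\frac{d}{a}\sum iz_i\geq h\lceil a/x\rceil+d$; for the upper bound you exhibit the factorization $(\lceil a/x\rceil-1)n_x+n_{x-s}$ when $a=\lceil a/x\rceil x-s$ with $0<s<x$ (and $\frac{a}{x}\,n_x$ when $x\mid a$), which indeed equals $(\lceil a/x\rceil h+d)a$. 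Your method avoids both the $\gcd(h,d)$ reduction and any appeal to the membership criterion, at the cost of a small case split; the paper's method is more structural and reuses the machinery already in place for $S$ itself. Either way the exposition of your upper bound would benefit from being tightened to just the explicit factorization you give in the final paragraph, as the exploratory remarks before it are not needed.
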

\begin{proof} Let $c=\gcd(ha+d,\ldots,ha+xd)=\gcd(h,d)$. Then
\[\begin{split}\min\{k\in\NN\setminus\{0\}\,|\, &ka\in\langle ha+d,\ldots,ha+xd\rangle\}\\
&=c\min\{k\in\NN\setminus\{0\}\,|\, ka\in\langle\frac{h}{c}a+\frac{d}{c},\ldots,\frac{h}{c}a+x\frac{d}{c}\rangle\}.\end{split}\]
Therefore we must show that
\[\min\{k\in\NN\setminus\{0\}\,|\, ka\in\langle\frac{h}{c}a+\frac{d}{c},\ldots,\frac{h}{c}a+x\frac{d}{c}\rangle\}=
\lceil\frac{a}{x}\rceil\frac{h}{c}+\frac{d}{c}.\]
So we may assume that $\gcd(h,d)=1$.
Let $a'=ha+d, h'=1, x'=x-1$ and $d'=d$. Then $S'=\langle a',h'a'+d',\ldots,h'a'+x'd'\rangle=\langle ha+d,\ldots,ha+xd\rangle$ is a numerical
monoid generated by a generalized arithmetic sequence. Let
$q'a'+i'd=ka$ with $0\leq i'<a'$ and $\lceil\frac{i'}{x'}\rceil h'\leq
q'$ and $k\neq 0$. $\lceil\frac{i'}{x'}\rceil h' \leq q'$ is equivalent
to $i'\leq q'x'$. Since $k\neq 0$ we have $q'>0$. From $q'a'+i'd=ka$
we have $(q'+i')d=(k-q'h)a$ and from $\gcd(a,d)=1$ we deduce that there is
$j>0$ such that $q'+i'=ja$ and $k-q'h=jd$. From $q'+i'=ja$ and
$i'\leq q'x'$ we deduce that $ja-q'\leq q'x'$ or equivalently
$ja\leq q'(x'+1)=q'x$. Thus, if $k>0$, then $ka\in S'$ if and only if there are
$q'>0$ and $j>0$ such that
$$ja\leq q'x,\,\,\text{and}\,\, k=q'h+jd.$$
The minimum of $k$ with the stated property is attained if we set
$j=1$. In this case $a\leq q'x$ and thus
$\lceil\frac{a}{x}\rceil\leq q'$. So the minimum of $k>0$ with the
property $ka\in S'$ is when $q'=\lceil\frac{a}{x}\rceil$ and $j=1$
which is $k=q'h+d=\lceil\frac{a}{x}\rceil h+d$.
\end{proof}
\begin{theorem}\label{cat} Let $S=\langle a,ha+d,\ldots,ha+xd\rangle$ where $a,d,h$, and $x$
are positive integers, $\gcd(a,d)=1$, and $1\leq x\leq a-1$. Then
\[\cat(S)=\lceil\frac{a}{x}\rceil h+d.\]
\end{theorem}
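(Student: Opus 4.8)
The plan is to establish the two inequalities separately. The lower bound $\cat(S)\geq\lceil\frac{a}{x}\rceil h+d$ is essentially free: Theorem~\ref{mincatenary} gives $\min\{k\in\NN\setminus\{0\}\mid ka\in\langle ha+d,\ldots,ha+xd\rangle\}\leq\cat(S)$, and Lemma~\ref{mingeneral} identifies that minimum as $\lceil\frac{a}{x}\rceil h+d$. So the whole content is the reverse inequality $\cat(S)\leq\lceil\frac{a}{x}\rceil h+d$.

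For the upper bound I would first pass to $\mu$ via the identity $\cat(S)=\mu(S)$ (the theorem of \cite{CGLPR} quoted above). Since $\mu(S)=\max\{\mu(n)\mid n\in S,\ k_n\geq2\}$, since $k_n\geq2$ is exactly the condition that $C_n$ be disconnected, and since $\mu(n)=\max_i r_i^n\leq\max\len(n)$ trivially, it suffices to prove that $\max\len(n)\leq\lceil\frac{a}{x}\rceil h+d$ for every $n\in S$ with $C_n$ disconnected. This is where Theorem~\ref{minpres} does the structural work: such an $n$ is either of the form $n=n_i+n_j$ with $1\leq i,j\leq x$, or lies in $\{pn_x+n_{r+1},\ldots,pn_x+n_x=(p+1)n_x\}$ with $p=\lfloor\frac{a-1}{x}\rfloor$ and $r=(a-1)\Mod x$. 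I would record once and for all that $x\leq a-1$ forces $p\geq1$ and that $\lceil\frac{a}{x}\rceil=\lfloor\frac{a-1}{x}\rfloor+1=p+1$, so the target bound becomes simply $(p+1)h+d$.

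What remains is to push each of the two families through the closed formula for $\max\len$ from Corollary~\ref{order} (or the Proposition preceding it). For $n=n_i+n_j=2ha+(i+j)d$ one reduces to the canonical representation $qa+i'd$ with $0\leq i'<a$, splitting according to whether $i+j\leq a-1$ (then $q=2h$, $i'=i+j$) or $i+j\geq a$ (then, since $0\leq i+j-a<a$, one has $q=2h+d$, $i'=i+j-a$); in either case $\max\len(n)$ evaluates to one of $h+1$, $2$, or $(2h+d)-\lceil\frac{i+j-a}{x}\rceil(h-1)$, each at most $2h+d\leq(p+1)h+d$. For $n=pn_x+n_k=(p+1)ha+(px+k)d$ with $r+1\leq k\leq x$ one uses $px+r+1=a$ to see that $a\leq px+k<2a$, so the canonical form is $((p+1)h+d)a+(k-r-1)d$ with $0\leq k-r-1<x$, and the formula gives $\max\len(n)=(p+1)h+d-\lceil\frac{k-r-1}{x}\rceil(h-1)\leq(p+1)h+d$. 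Combining with the lower bound yields the claimed equality.

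I expect the genuinely delicate points to be minor: getting each relevant $n$ into the canonical shape $qa+id$ with $0\leq i<a$ \emph{before} invoking the $\max\len$ formula, and tracking when the ceiling terms vanish. The real content is carried by the three imported facts ($\cat(S)=\mu(S)$, the classification of disconnected $C_n$ in Theorem~\ref{minpres}, and the formula for $\max\len$), and the estimate works precisely because $p\geq1$, i.e.\ because $x\leq a-1$.
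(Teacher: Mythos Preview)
Your proposal is correct and follows essentially the same route as the paper: the lower bound via Theorem~\ref{mincatenary} and Lemma~\ref{mingeneral}, and the upper bound by combining $\cat(S)=\mu(S)$, $\mu(n)\leq\max\len(n)$, the classification of disconnected $C_n$ in Theorem~\ref{minpres}, and the $\max\len$ formula from Corollary~\ref{order}. Your treatment of the second family is in fact slightly cleaner than the paper's---you handle all $r+1\leq k\leq x$ at once via the canonical form $((p+1)h+d)a+(k-r-1)d$, whereas the paper splits off the case $k=x$ separately.
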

\begin{proof} We know that $\cat(S)\geq \lceil\frac{a}{x}\rceil h+d$ by Theorem \ref{mincatenary} and Lemma
\ref{mingeneral}. We complete the proof by
showing that if $n\in S$ is such that the number of $\mathscr{R}$-classes of $n$ is bigger than
one then $\max\len(n)\leq \lceil\frac{a}{x}\rceil h+d$. By Theorem \ref{minpres} we have two cases.
\begin{itemize}
\item First let $n=n_i+n_j=2ha+(i+j)d$ with $1\leq i,j\leq x$. Then
\[\begin{split}
\cat(n)\leq\max\len(n)&=2h+\lfloor\frac{i+j}{a}\rfloor d-\lceil\frac{(i+j)\Mod a}{x}\rceil(h-1)\\
&\leq 2h+\lfloor\frac{i+j}{a}\rfloor d\\
&\leq 2h+d\\
&\leq \lceil\frac{a}{x}\rceil h+d.
\end{split}\]
\item Now let $n\in\{pn_x+n_{r+1},\ldots,pn_x+n_x=(p+1)n_x\}$, with
$p=\lfloor\frac{a-1}{x}\rfloor$ and $r=(a-1)\Mod x$. If $r+1\leq j<x$ and
$n=pn_x+n_j$ then $n=(\lfloor\frac{a-1}{x}\rfloor+1)ha+jd=\lceil\frac{a}{x}\rceil ha+jd$ and
$$\max\len(n)=\lceil\frac{a}{x}\rceil h-\lceil\frac{j}{x}\rceil(h-1)\leq \lceil\frac{a}{x}\rceil h+d.$$
If $n=(p+1)n_x$ then $n=\lceil\frac{a}{x}\rceil ha+(px+x)d=(\lceil\frac{a}{x}\rceil h+d)a+(px+x-a)d=(\lceil\frac{a}{x}\rceil h+d)a+(x-1-r)d$. Since
$0\leq x-1-r< x$ we have
$$\max\len(n)=\lceil\frac{a}{x}\rceil h+d-\lceil\frac{x-1-r}{x}\rceil(h-1)\leq \lceil\frac{a}{x}\rceil h+d.$$
\end{itemize}
\end{proof}
\subsection{The tame degree} Now we find the tame degree of numerical monoids generated by
generalized arithmetic sequences.
\begin{theorem}[\cite{CGL}, Theorem 16]\label{tametest} Let $S$ be a numerical monoid minimally generated by $\{n_1<\ldots,n_p\}$. Let $n\in S$ be minimal satisfying $\tame(n)=\tame(S)$. Then $n=w+n_i$ for
some $i\in\{1,\ldots,p\}$ and $w\in \Ap(S,n_j)$ with $j\in\{1,\ldots,p\}\setminus \{i\}$.
\end{theorem}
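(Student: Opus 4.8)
The plan is to exploit the minimality of $n$ together with a ``distance transfer'' trick: starting from a worst-case factorization $z$ of $n$, one peels off a generator $n_j$ occurring in $z$ and moves the bad distance down to the smaller element $n-n_j$ --- unless $w:=n-n_i$ already lies in the Ap\'{e}ry set $\Ap(S,n_j)$.

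First I would fix the set-up. We may assume $S\neq\NN$, so that $\tame(S)\ge 2>0$. Since $\tame(n)=\tame(S)=\max_j\tame_j(n)$, choose $i$ with $\tame_i(n)=\tame(n)$; then $n-n_i\in S$ and I set $w:=n-n_i\in S$. Because $\fact(n)$ is finite, pick $z\in\fact(n)$ realizing $\dd(z,\fact^i(n))=\tame_i(n)$. As $\tame_i(n)>0$ we must have $i\notin\supp(z)$, since otherwise $z\in\fact^i(n)$ and $\dd(z,\fact^i(n))=0$; thus $\supp(z)$ is a nonempty subset of $\{1,\ldots,p\}\setminus\{i\}$. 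The goal is then reduced to the claim that $w-n_j\notin S$ for every $j\in\supp(z)$. Granting it, any $j\in\supp(z)$ has $j\neq i$, $w\in S$, and $w-n_j\notin S$, so $w\in\Ap(S,n_j)$ and $n=w+n_i$ has exactly the asserted form.

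To prove the claim, suppose for contradiction that $w-n_j\in S$ for some $j\in\supp(z)$, and put $n'=n-n_j<n$ and $z'=z-e_j\in\fact(n')$, where $e_j$ is the $j$-th unit vector (note $z_j\ge 1$). Then $n'-n_i=w-n_j\in S$, so $\tame_i(n')$ is defined and $\fact^i(n')\neq\varnothing$; moreover $y'\mapsto y'+e_j$ maps $\fact^i(n')$ into $\fact^i(n)$, because $i\neq j$ leaves the $i$-th coordinate untouched. The heart of the matter is the identity $\dd(z,y'+e_j)=\dd(z',y')$, valid for each $y'\in\fact^i(n')$: writing $z=z'+e_j$ one checks that $\gcd(z,y'+e_j)=\gcd(z',y')+e_j$, so the common $e_j$ cancels in both quotients appearing in the definition of $\dd$. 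Consequently
\[\dd(z',\fact^i(n'))=\min_{y'\in\fact^i(n')}\dd(z,y'+e_j)\ge\dd(z,\fact^i(n))=\tame_i(n),\]
the inequality holding because $\{y'+e_j:y'\in\fact^i(n')\}\subseteq\fact^i(n)$. Hence $\tame(n')\ge\tame_i(n')\ge\dd(z',\fact^i(n'))\ge\tame_i(n)=\tame(S)$, so $\tame(n')=\tame(S)$ with $n'<n$, contradicting the minimality of $n$ and proving the claim.

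The step I expect to require the most care is the distance identity $\dd(z,y'+e_j)=\dd(z',y')$: although elementary, it rests on checking that subtracting the same unit vector from both arguments shifts their $\gcd$ by exactly that vector, leaving both quotients --- and hence the distance --- unchanged. Everything else is bookkeeping with the definitions of $\tame_i$, $\fact^i$, and the Ap\'{e}ry set.
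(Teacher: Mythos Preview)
The paper does not supply its own proof of this statement; it is quoted verbatim from \cite{CGL}, so there is nothing in the paper to compare against. Your argument is correct: the distance-transfer identity $\dd(z'+e_j,y'+e_j)=\dd(z',y')$ follows immediately from $\gcd(z'+e_j,y'+e_j)=\gcd(z',y')+e_j$, and the rest is indeed bookkeeping. In fact you establish a bit more than the stated conclusion --- you show that $w-n_j\notin S$ for \emph{every} $j\in\supp(z)$, not merely for some such $j$ --- and this stronger form is exactly what the present paper tacitly uses a few lines later when it writes ``Theorem~\ref{tametest} ensures that $n-(n_i+n_j)\notin S$'' for an arbitrary $j$ in the support of $z$. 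So your proof both recovers the cited result and justifies the way the paper actually applies it.
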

\begin{theorem} Let $S=\langle a,ha+d,\ldots,ha+xd\rangle$ where $a,d,h$, and $x$
are positive integers, $\gcd(a,d)=1$, and $1\leq x\leq a-1$. Then
\[\tame(S)=\omega(S)=\big(\lceil\frac{a-1}{x}\rceil +1\big)h+d.\]
\end{theorem}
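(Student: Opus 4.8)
Write $B:=\big(\lceil\tfrac{a-1}{x}\rceil+1\big)h+d$, the claimed value. The plan is to route everything through the invariants $\omega$ and $\tau$, using the formula $\tame_i(S)=\max\{\omega(S,n_i),\,1+\tau(S,n_i)\}$ established above, together with $\tame(S)=\max_i\tame_i(S)$ and $\omega(S)=\max_i\omega(S,n_i)$ (which already gives $\omega(S)\le\tame(S)$). It therefore suffices to prove: (a) $\omega(S,n_1)\ge B$; (b) $\omega(S,n_i)\le B$ for all $i$; (c) $\tau(S,n_i)\le B-1$ for all $i$. Granting these, (a) gives $\omega(S)\ge B$, hence $\tame(S)\ge B$, while (b) and (c) give $\tame_i(S)=\max\{\omega(S,n_i),1+\tau(S,n_i)\}\le B$ for every $i$, hence $\tame(S)\le B$; combining, $B\le\omega(S)\le\tame(S)\le B$ and all three equal $B$.

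For (a) I would give an explicit witness. After the usual reduction to atoms, $\omega(S,b)$ equals the largest size of a minimal family of atoms $\{u_1,\dots,u_N\}$ with $b\le_S u_1+\cdots+u_N$ but $b\not\le_S\sum_{\nu\ne k}u_\nu$ for each $k$. Take $u_1=\cdots=u_B=n_0=a$. For $1\le k\le B$ the element $ka-n_1=ka-ha-d$ has normal form $(k-h-d)a+(a-1)d$ in the sense of Corollary~\ref{order}, so by the membership criterion $ka-n_1\in S$ exactly when $\lceil\tfrac{a-1}{x}\rceil h\le k-h-d$, i.e.\ exactly when $k\ge B$. Thus $n_1\le_S Ba$ while $n_1\not\le_S ka$ for every $k<B$; the family of $B$ copies of $a$ is bad and minimal, so $\omega(S,n_1)\ge B$.

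For (b) and (c) I would examine an arbitrary such minimal family $\{u_1,\dots,u_N\}$ for $\omega(S,n_i)$, with $m:=u_1+\cdots+u_N$; these are precisely the families over which $\tau(S,n_i)$ takes the supremum of $\min\len(m-n_i)$. The goal is $N\le B$ and $\max\len(m-n_i)\le B-1$ (the latter bounds $\min\len(m-n_i)$, hence $\tau$). If $n_i$ occurs among the $u_k$ it occurs exactly once (two copies would force $m-2n_i\in S$, contradicting minimality), so $m-n_i$ is a sum of $N-1$ atoms and minimality gives $m-n_i\in\Ap(S,n_i)$; using Theorem~\ref{apery} for $i=0$ and Corollary~\ref{order} for $i\ge1$ one checks $\max\{\max\len(w):w\in\Ap(S,n_i)\}\le B-1$ for every $i$ (with equality only when $i=1$, attained at $w=(B-1)a$), whence $N-1\le\max\len(m-n_i)\le B-1$. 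If $n_i$ does not occur but $n_0=a$ does (so $i=j\ge1$), minimality forces $m-n_j\in\Ap(S,a)=\{\lceil\tfrac{\ell}{x}\rceil ha+\ell d:0\le\ell\le a-1\}$, so $\max\len(m-n_j)\le\lceil\tfrac{a-1}{x}\rceil\le B-1$; writing $m=n_j+(m-n_j)$ and feeding the resulting normal form into Corollary~\ref{order} gives $N\le\max\len(m)\le B$ after a short case check on whether the $d$-coordinate of $m$ exceeds $a$.

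The remaining case — neither $n_i$ nor $n_0$ occurs, so $i=j\ge1$ and every $u_k=ha+i_kd$ with $i_k\in\{1,\dots,x\}\setminus\{j\}$ — is the technical heart, to be handled directly from the membership criterion. Put $m-n_j=(N-1)ha+(I-j)d$ (with $I=\sum i_k$) into normal form $Q'a+R'd$; membership reads $Q'\ge\lceil\tfrac{R'}{x}\rceil h$. Choose $k$ with $u_k$ the largest atom of the family. If $m-u_k-n_j\ge0$, put it into normal form $Q''a+R''d$: non-membership gives $Q''\le\lceil\tfrac{R''}{x}\rceil h-1\le\lceil\tfrac{a-1}{x}\rceil h-1$, while the carry analysis of $I-i_k-j$ gives $Q''\ge(N-2)h-d$, together forcing $N\le\lceil\tfrac{a-1}{x}\rceil+d+1\le B$; and from $m-n_j=(m-u_k-n_j)+u_k$ with $R''<a$ one reads off $\max\len(m-n_j)\le B-1$ by a short normal-form computation. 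If instead $m-u_k-n_j<0$ for the largest $u_k$ (hence for every removed atom), then $m-n_j$ lies below the smallest atom of the family and so below $n_x$, and elements of $S$ below $n_x$ have normal form $Q'a+R'd$ with $Q'\le h+d-1$, giving $\max\len(m-n_j)\le B-1$; since moreover each $u_k\ge n_1$ while $m<n_j+n_x$, deleting one atom already pushes the remaining sum past $n_j$ unless $N$ is small, again yielding $N\le B$. The delicate point throughout this last case is keeping all of these carry-dependent estimates simultaneously below $B$; the exact constant $B$ surfaces precisely because $\min\{k:ka-n_1\in S\}=B$ and $\max\{\max\len(w):w\in\Ap(S,n_1)\}=B-1$, both consequences of Corollary~\ref{order}.
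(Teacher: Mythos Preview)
Your route through the identity $\tame_i(S)=\max\{\omega(S,n_i),1+\tau(S,n_i)\}$ is genuinely different from the paper's. The lower bound in~(a) matches the paper's argument exactly (same witness $n=Ba$); but for the upper bound the paper instead invokes Theorem~\ref{tametest}: a minimal $n$ with $\tame(n)=\tame(S)$ must satisfy $n=w+n_i$ with $w\in\Ap(S,n_j)$ for some $j\ne i$, so one need only bound $\max\len(n)$ via Corollary~\ref{order} in two short cases according to whether one of $i,j$ is zero. This bypasses your analysis of minimal $\omega$-families altogether. Your approach avoids that external structural lemma, at the cost of a longer direct case split that must control $N$ and $\max\len(m-n_j)$ simultaneously.

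Parts~(a) and the first two cases of~(b)/(c) are correct (the Ap\'ery bound $\max\len(w)\le B-1$ for $w\in\Ap(S,n_i)$ with $i\ge1$ does check out from Corollary~\ref{order}, though you only assert it). The third case has slips. The parenthetical ``(hence for every removed atom)'' is backwards: removing the \emph{largest} $u_k$ minimises $m-u_k$, so $m-u_k-n_j<0$ for the largest $u_k$ does not propagate to smaller atoms. Likewise ``$m-n_j$ lies below the smallest atom of the family'' should read ``largest'' (the conclusion $m-n_j<n_x$ survives). In subcase~1, the carry bound $Q''\ge(N-2)h-d$ needs the remark that $I-i_k-j\ge N-1-x>-a$ forces at most one borrow. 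In subcase~2, the bound on $N$ is only gestured at; the clean argument is that $m-u_k=\sum_{l\ne k}u_l<n_j\le n_x$ lies in $S$, so $N-1\le\max\len(m-u_k)\le h+d-1$ by the normal-form bound you already stated, whence $N\le h+d\le B$. None of these is fatal---each closes with a short verification---but as written the third case is a sketch rather than a proof, and the paper's route via Theorem~\ref{tametest} is both shorter and cleaner here.
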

\begin{proof} We set $t=\lceil\frac{a-1}{x}\rceil h+h+d$ and $n=ta$. Then
$$w=n-n_1=(\lceil\frac{a-1}{x}\rceil h+d)a-d=\lceil\frac{a-1}{x}\rceil ha+(a-1)d\in\Ap(S,a)$$
Therefore there exists a factorization $z\in\fact^1(n)$. Also $z'=(t,0,\ldots,0)\in\fact(n)\setminus\fact^1(n)$ and
therefore $\tame(S)\geq\tame_1(n)\geq \dd(z',\fact^1(n))=|z'|=t$. We complete the proof by concluding that $\tame(S)\leq
t$.
Let $n\in S$ be minimal such that $\tame(n)=\tame(S)$. Assume that $z$ is a factorization of
$n$ such that $\dd(z,z')=\tame(n)$ with $z'\in\fact^i(n)$. From the minimality of $n$, we can derive
that $\supp(z)\cap\supp(z')=\varnothing$ and thus $\dd(z,z')=\max\{|z|,|z'|\}$. Let $j$ be in the
support of $z$. Theorem \ref{tametest} ensures that $n-(n_i+n_j)\notin S$. Thus $n=(n-n_i)+n_i=(n-n_j)+n_j$ with
$n-n_j\in\Ap(S,n_i)$ and $n-n_i\in\Ap(S,n_j)$.
\begin{itemize}
\item If either $i$ or $j$, say $i$, is $0$ then $n-n_j=\lceil\frac{k}{x}\rceil ha+kd$ for
some $0\leq k<a$ by Theorem \ref{apery}. Then
\[\begin{split}
\tame(n)&\leq\max\len(n)=\max\len\big((\lceil\frac{k}{x}\rceil h+h)a+(k+j)d\big)\\
&=\lceil\frac{k}{x}\rceil h+h+\lfloor\frac{k+j}{a}\rfloor d-\lceil\frac{(k+j)\Mod a}{x}\rceil (h-1)\\
&\leq \lceil\frac{k}{x}\rceil h+h+\lfloor\frac{k+j}{a}\rfloor d\\
&\leq \lceil\frac{a-1}{x}\rceil h+h+\lfloor\frac{k+j}{a}\rfloor d\\
&\leq \lceil\frac{a-1}{x}\rceil h+h+d=t.\\
\end{split}\]
\item Let neither $i$ nor $j$ is $0$. Assume that $\tame(S)>t$. Then either $z$ or $z'$ is greater
than $t$. This implies that either $n-n_i$ or $n-n_j$ has order greater than or equal to $t$. Let for
example $\max\len(n-n_i)\geq t$. Since $n-n_i-n_j\notin S$ we have
\[\lceil\frac{(k-i-j)\Mod a}{x}\rceil h>q-2h+\lfloor\frac{k-i-j}{a}\rfloor d\]
and therefore
\[\begin{split}
\max\len(n-n_i)&=\max\len((q-h)a+(k-i)d)\\
&=\max\len((q-h+\lfloor\frac{k-i}{a}\rfloor d)a+((k-i)\Mod a)d)\\
&=q-h+\lfloor\frac{k-i}{a}\rfloor d-\lceil\frac{(k-i)\Mod a}{x}\rceil(h-1)\\
&\leq q-h+d\\
&<\lceil\frac{(k-i-j)\Mod a}{x}\rceil h+h-\lfloor\frac{k-i-j}{a}\rfloor d+d\\
&<\lceil\frac{a-1}{x}\rceil h+h+d=t,\\
\end{split}\]
a contradiction.
\end{itemize}
Therefore $\tame(S)=t$.

We know that $\omega(S)\leq \tame(S)$. We complete the proof by showing that $\omega(n,n_1)\geq t$, where $n=ta$. We saw that $n_1\leq_S n$; again we consider the decomposition
$$n=\underbrace{a+\ldots+a}_{t-\text{times}}.$$
Then $n-n_1\in\Ap(S,n_0)$; so $n_1\nleq_S n-n_0$, $\omega(n,n_1)\geq t$, and therefore $\omega(S,n)\geq t$.
\end{proof}
\begin{remark} In [\cite{GH08}, Theorem 4.6]  it has been shown that in a large class of Krull monoids, the tame degree coincides with the $\tau$ invariant, whereas in numerical monoids generated by generalized arithmetic sequences it coincides with the $\omega$ invariant.
\end{remark}
\begin{remark} Let $S=\langle a,a+d,\ldots,a+xd\rangle$. In \cite{CGL} the catenary degree and the
tame degree of $S$ are found as
$$\cat(S)=\lceil\frac{a}{x}\rceil +d, \quad \tame(S)=\left\{\begin{array}{ll}
\lceil\frac{a}{x}\rceil+d & \mathrm{if}\,\, a\Mod x=1,\\
\lceil\frac{a}{x}\rceil+d+1 & \mathrm{otherwise}.\\
\end{array}\right.$$
Since
$$\lceil\frac{a-1}{x}\rceil+1=\left\{\begin{array}{ll}
\lceil\frac{a}{x}\rceil & \mathrm{if}\,\, a\Mod x=1,\\
\lceil\frac{a}{x}\rceil+1 & \mathrm{otherwise},\\
\end{array}\right.$$
our results for $h=1$ agree with these results.
\end{remark}
\begin{remark}Let $S=\langle a,ha+d,\ldots,ha+xd\rangle$ where $a,d,h$, and $x$
are positive integers, $\gcd(a,d)=1$, and $1\leq x\leq a-1$. Then
$$\tame(S)-\cat(S)=\big(\lceil\frac{a-1}{x}\rceil+1-\lceil\frac{a}{x}\rceil\big)h.$$
If $(a\mod x)\neq 1$ then $\lceil\frac{a-1}{x}\rceil+1-\lceil\frac{a}{x}\rceil=1$ and therefore, even if
the number of minimal generators is fixed,
$\tame(S)-\cat(S)$ can be arbitrary large as $h$ tends to infinity. The fact the difference of the tame degree and the catenary degree is growing holds for Krull monoids with cyclic class group or whose class group is an elementary 2-group, and every class contains a prime divisor (in both cases the difference grows with the Davenport constant). These follow from Proposition 6.5.2(2), Theorem 6.5.3(1), and Theorem 6.4.2 in \cite{GH}.
\end{remark}
\begin{remark} Let $S=\langle a,ha+d,\ldots,ha+xd\rangle$ where $a,d,h$, and $x$
are positive integers, $\gcd(a,d)=1$, and $1\leq x\leq a-1$. It is known that [\cite{OR}, Theorem 2.8]
\[g(S)=\lceil\frac{a-1}{x}\rceil ha+ad-a-d.\]
Let
\[B=\frac{g(S)+n_p}{n_1}+1=(\lceil\frac{a-1}{x}\rceil +1)h+\frac{x-1}{a}d.\]
Then
\[B-\tame(S)=\frac{x-1}{a}d\]
and
\[B-\cat(S)=(B-\tame(S))+(\tame(S)-\cat(S))=\big(\lceil\frac{a-1}{x}\rceil+1-\lceil\frac{a}{x}\rceil\big)h+
\frac{x-1}{a}d.\]
Firstly let $(a\mod x)=1$. Then $\cat(S)=\tame(S)$ while $B-\cat(S)=(x-1)d/a$ can be arbitrary large as we let $d$ goes to infinity.

Secondly let $(a\mod x)\neq 1$ and choose $d$ so that $(x-1)d<a$. Then $\lfloor B\rfloor=\tame(S)$ while
$B-\cat(S)=h+(x-1)d/a$ can be arbitrary large as we let $h$ goes to infinity.
\end{remark}
\section*{Acknowledgment}
We thank A. Geroldinger, P. A. Garc\'{i}a-S\'{a}nchez and S. T. Chapman for their help in developing this paper.

\end{document}